\documentclass[pdflatex,sn-mathphys-num]{sn-jnl}
\usepackage{iftex}
\usepackage[T1]{fontenc}
\ifPDFTeX
\usepackage[utf8]{inputenc}
\fi
\usepackage{lmodern}
\usepackage{amsmath,amssymb,amsthm,mathtools}
\usepackage{mathrsfs}
\DeclareFontFamily{U}{rsfs}{\skewchar\font127 }
\DeclareFontShape{U}{rsfs}{m}{n}{%
  <-6.5> rsfs5
  <6.5-8> rsfs7
  <8-> rsfs10
}{}
\usepackage{microtype,booktabs,xcolor}
\usepackage[nameinlink,noabbrev]{cleveref}
\hypersetup{hidelinks,hypertexnames=false}

\newtheorem{theorem}{Theorem}[section]
\newtheorem{lemma}[theorem]{Lemma}

\theoremstyle{remark}

\numberwithin{equation}{section}
\newcommand{\Z}{\mathbb Z}
\newcommand{\PP}{\mathbb P}
\newcommand{\EE}{\mathbb E}
\newcommand{\Dir}{\mathcal D}
\newcommand{\T}{\mathcal T}
\newcommand{\Cp}{C^{+}}
\newcommand{\thetab}{\theta_{\mathrm b}}
\newcommand{\rights}{\rightsquigarrow}
\makeatletter
\let\PTRForiginalheadings\ps@headings
\def\ps@headings{%
  \PTRForiginalheadings
  \let\@oddhead\@empty
  \let\@evenhead\@empty}
\makeatother

\begin{document}

\title[Two-neighbor lattice percolation percolates in all dimensions]{Two-neighbor lattice percolation percolates in all dimensions}

\author[1]{\fnm{Jing} \sur{Liu}}\email{jingliu0813@126.com}
\author[1]{\fnm{Kainan} \sur{Xiang}}\email{kainan.xiang@xtu.edu.cn}
\author[2]{\fnm{Lang} \sur{Zou}}\email{langzou@xtu.edu.cn}

\affil[1]{\orgdiv{Hunan Key Laboratory for Computation and Simulation in Science and Engineering \& Key Laboratory of Intelligent Computing and Information Processing of Ministry of Education \& School of Mathematics and Computational Science}, \orgname{Xiangtan University}, \orgaddress{\city{Xiangtan City} \postcode{411105}, \state{Hunan Province}, \country{P. R. China}}}
\affil[2]{\orgdiv{School of Artificial Intelligence and Statistics}, \orgname{Xiangtan University}, \orgaddress{\city{Xiangtan City} \postcode{411105}, \state{Hunan Province}, \country{P. R. China}}}

\abstract{
It is trivial that $1$-dimensional $2$-neighbor lattice percolation percolates. Coupier, Henry, Jahnel and K\"oppl proved
the $2$-neighbor lattice percolation percolates in dimension 2; and then for sufficiently large dimensions, B\"aumler, Jahnel, K\"oppl, Lodewijks, Reeves and T\'obi\'as
showed this is still true. In this paper, we verify that the $2$-neighbor lattice percolation percolates in all dimensions $d\ge 3.$
}

\keywords{Two-neighbor lattice percolation, fiber exploration, avoidance transformation.}
\pacs[2020 Mathematics Subject Classification]{Primary 60K35; Secondary 82B43, 05C80.}
\maketitle

\section{\bf The model and the result}\label{sec:model}
\setcounter{equation}{0}
\noindent
Let $d\in\mathbb{N}=\{1,\, 2,\,\cdots\}$ and $e_1,\,\cdots,\, e_d$ be the standard coordinate
vectors of $\Z^d$. Write
$$\Dir_d=\{e_1,-e_1,\, e_2,-e_2,\, \cdots,\, e_d,-e_d\}.$$
Given any $1\le k\leq 2d.$ Recall the following $k$-neighbor lattice percolation (random graph) was introduced by Jahnel, K\"oppl, Lodewijks and T\'obi\'as (2025) \cite{Ja-Ko2025}:
\begin{eqnarray*}
&&\text{At any vertex $x\in\mathbb{Z}^d,$ independently choose a uniformly distributed
$k$-element}\\
&&\text{subset $S_x$ of $\Dir_d$. Put a directed arrow from $x$
to $x+v$ for each $v\in S_x$.}
\end{eqnarray*}
The $k$-neighbor lattice percolation is a discrete counterpart of the continuum $k$-nearest-neighbor model. The latter was introduced by H\"{a}ggstr\"{o}m and Meester (1996) \cite{Ha-Me1996}, and then
extended by Balister and Bollob\'{a}s (2013) \cite{Ba-Bo2013}, and has attracted significant attention. A feature of the $k$-neighbor lattice percolation is that it can be viewed as an $1$-dependent Bernoulli bond directed percolation
and exhibits a negative correlation property. For a relevant $k$-out percolation model, refer to Amir, Heydenreich and Hirsch (2025) \cite{Am-He2025}.

Let $\PP_{k,d}^{+}$ be the law of
this random graph. Use $x\rights y$ to denote that there is a finite directed path
from $x$ to $y$ (including path of length zero). Set
\[
 \Cp(x)=\left\{y\in\mathbb{Z}^d:\ x\rights y\right\}\ \mbox{for}\ x\in\mathbb{Z}^d,\ \mbox{and}\ \theta^{+}(k,d)=\PP_{k,d}^{+}\left(\left|\Cp(\mathbf{0})\right|=\infty\right),
\]
where $\mathbf{0}$ is the origin. Clearly, the existence of an infinite outgoing cluster $\Cp(x)$ is equivalent to that of an infinite self-avoiding directed path from $x$. Jahnel, K\"oppl,
Lodewijks and T\'obi\'as \cite{Ja-Ko2025} initiated study on the $k$-neighbor lattice percolation and conjectured that
$$ \theta^{+}(2,d)>0,\ d\in\mathbb{N}.$$
See \cite[Conjecture 3.1]{Ja-Ko2025}.

Trivially $\theta^{+}(2,1)>0$. From Coupier, Henry, Jahnel and K\"oppl (2026) \cite[Theorem 2.3]{Co-He2026},
$\theta^{+}(2,2)>0;$ and the related proof is highly nontrivial. The local connectivity comparison of B\"aumler, Jahnel, K\"oppl, Lodewijks, Reeves and T\'obi\'as (2026) \cite{Bau-Ja2026} yields that the
2-neighbor percolation percolates in all sufficiently large dimensions \cite[Corollary 3.7]{Bau-Ja2026}.

To state our main result, write $\thetab(p,d)$ for the probability of an infinite open cluster at the origin in ordinary undirected Bernoulli bond percolation on $\Z^d$ with each edge being open independently with probability $p\in [0,1]$.
Note the critical probability of the Bernoulli bond percolation on $\Z^2$ is $1/2$ (Harris (1960) \cite{Ha1960}, Kesten (1980) \cite{Ke1980}). Then we have

\begin{theorem}\label{thm:main}
For any integer $d\geq 3$,
\begin{equation}
\label{eq:uniformbound}
                 \theta^{+}(2,d)\geq\thetab(11/20,2)>0.
\end{equation}
\end{theorem}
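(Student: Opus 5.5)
We will find inside the 2-neighbor graph on $\Z^d$ a random subgraph that dominates Bernoulli bond percolation on $\Z^2$ with parameter $11/20$. The keywords suggest a \emph{fiber exploration}. Fix the plane spanned by $e_1,e_2$. For each $z\in\Z^2$, let the fiber be $F_z=\{(z,w):w\in\Z^{d-2}\}$. We explore fiber by fiber, in the order dictated by a standard exploration of the open cluster of the origin in $\Z^2$. When we enter a fiber $F_z$ at a vertex $x$, we try to leave it through an edge to a neighbouring fiber $F_{z\pm e_i}$. If $S_x$ contains the required planar direction, we leave immediately. Otherwise we follow an arrow of $S_x$ in a transverse direction $\pm e_j$, $j\ge3$, to a fresh vertex of the same fiber and try again.

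The key point is the \emph{avoidance transformation}. The vertices used inside a fiber must be fresh, so that their sets $S_y$ are still independent of the past. We will show that at each step, conditionally on the past, we can choose a transverse step to an unvisited vertex. A concrete choice is to move monotonically in coordinate $e_3$, using its sign as a "clock" so that no vertex is revisited. If the exploration would otherwise force a revisit, we map the configuration to an equally likely one by permuting direction labels. This exploits the exchangeability of $\Dir_d$ and the uniformity of $S_x$.

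Next we compute the probability that a planar edge $\{z,z+u\}$ is declared open, meaning the fiber walk eventually reaches $F_{z+u}$. This has to be handled together for all four planar edges out of $z$. They are not independent, since one set $S_x$ serves several directions. So we need a comparison lemma: conditionally on the history, each unexplored planar edge out of the current fiber is open with probability at least $11/20$.

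The number $11/20$ should come from a short computation with the uniform $2$-subset. At a single vertex, a given direction $v$ lies in $S_x$ with probability $2/(2d)=1/d$. Because the walk inside the fiber is not forced to stop, we get repeated independent tries, and the success probability becomes a geometric sum. We must also account for arrows pointing back to already-explored planar fibers, and for the loss incurred when we need to exit in a particular direction. Minimizing over $d\ge3$ should give a bound at least $11/20$, with the worst case presumably $d=3$, where the fiber is the line $\Z$. Given that bound, a sequential domination argument yields $\theta^+(2,d)\ge\thetab(11/20,2)$. This is the standard coupling used when each new edge is open with conditional probability $\ge p$ given the exploration history. Finally, $\thetab(11/20,2)>0$ follows from Harris–Kesten, since $11/20>1/2$.

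The main obstacle is the avoidance step when $d=3$. There the fiber is one-dimensional, and the transverse arrows $\pm e_3$ may point back towards visited sites. We must also avoid reusing vertices that other fibers' explorations want to enter later, and explorations from different planar edges out of the same fiber interfere with each other. I would first try reserving, for each planar edge direction, a disjoint half-line of the fiber. We then enter each fiber at a single point and use the two transverse directions as independent retries. We then check whether the resulting bound still reaches $11/20$; if not, we refine by allowing exit in any unexplored planar direction.
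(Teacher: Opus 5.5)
Your proposal has a genuine gap, and it sits in the comparison lemma your whole argument rests on. You need each unexplored planar edge out of a fiber to be open with probability at least $11/20$ conditionally on the history, which includes the other edges out of the same fiber, so that a sequential coupling dominates independent $11/20$-arrows. That is a stochastic-domination statement, and it is false already for $d=3$. It fails even for the most generous exploration available, which collects every horizontal exit of the whole vertically reachable interval. By Lemma~\ref{lem:transform} applied to $u^6$, that exit set $W$ has $\PP(W\cap A=\varnothing)=\tfrac{53}{121},\tfrac{11}{80},\tfrac{9}{845},0$ for $|A|=1,2,3,4$. So for distinct planar directions $v_1,v_2$,
\[
 \PP(v_1,v_2\in W)=1-2\cdot\tfrac{53}{121}+\tfrac{11}{80}=\tfrac{2531}{9680}<\left(\tfrac{11}{20}\right)^2 .
\]
Given one open exit, a second is open with probability only $2531/5440\approx0.465$. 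A scheme that enters each fiber once finds only a subset of $W$, so no ordering of checks rescues the sequential argument.

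Conditioning only on the past before the fiber does not help either. Marginals alone do not control percolation: all four arrows with probability $11/20$ and none otherwise is site percolation at density $11/20$, below the site threshold ($\approx0.59$) of $\Z^2$.

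The marginal $68/121$ is also barely above $11/20$, so any lossy scheme fails outright. Reserving a half-line plus the entrance vertex for one direction succeeds with probability $1-\bigl[u^6_2+(u^6_1-u^6_2)\tfrac{6}{11}\bigr]=\tfrac{5}{11}<\tfrac12$. Moreover, for $d=3$ there are only two half-lines for up to three new directions.

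What does hold is the weaker hitting bound $\PP(W\cap A\neq\varnothing)\geq 1-(9/20)^{|A|}$ for every $A$. Turning this into a connectivity comparison needs the local-to-global theorem of B\"aumler et al.\ (Theorem~\ref{thm:local-global}) plus the one-arm identity (Lemma~\ref{lem:one-arm}). That ingredient is what your plan lacks.

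Two further steps would not work as described. First, relabelling directions to avoid revisits is not a legitimate move, because the arrows are fixed by the environment. A path in a relabelled configuration need not exist in the actual graph. If you instead use relabelling to generate $S_x$ adaptively, $S_x$ is still uniform given the past, so the chance of being stuck is unchanged. For $d=3$ the walk reaches a fresh vertex only with probability $1/3$ per step, so there are no repeated independent tries.

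Second, collapsing all $d-2$ transverse coordinates into one fiber $\Z^{d-2}$ requires the exit set of a percolation-type cluster in $\Z^{d-2}$. That is as hard as the original problem, so your minimization over $d$ has no computation behind it. The paper instead projects one coordinate at a time:
\begin{itemize}
\item each fiber is a line, and the reachable set is an interval computable by a renewal equation;
\item the projected model is again an independent exchangeable arrow model, whose avoidance sequence transforms by the exact map $\T$;
\item the inequality $\T u^{q+2}\le u^q$ at lower indices, with monotonicity of $G$, gives dimension-free bounds $b_m<(9/20)^m$ on the final planar sequence.
\end{itemize}
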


By Theorem \ref{thm:main} and the previous results mentioned above, the $2$-neighbor lattice percolation percolates in all dimensions, namely \cite[Conjecture 3.1]{Ja-Ko2025} holds.

Use an independent uniform permutation of the $2d$ direction labels at each vertex $x\in\mathbb{Z}^d$, and take the first $k$ labels. This
couples the $2$-neighbor lattice percolation as a subgraph of every $k$-neighbor one with $k\geq 2$, proving $\theta^{+}(k,d)>0\ (k\geq 2)$ by Theorem \ref{thm:main}.
From \cite[Remark 2.1(i)]{Ja-Ko2025}, $\theta^{+}(1,d)=0$ for any $d\in\mathbb{N}.$ Hence, for the $k$-neighbor lattice percolation on $\mathbb{Z}^d$ with $1\leq k\leq 2d$,
$$  \theta^{+}(k,d)>0 \, \Longleftrightarrow\, k\geq 2.$$

For more information and questions (conjectures) on the $k$-neighbor lattice percolations, see \cite{Ja-Ko2025,Co-He2026,Bau-Ja2026,Am-He2025}.
\vskip 2mm

Our contribution is a fiber exploration and an exact avoidance transformation
that establish the bound \eqref{eq:uniformbound} for any $d\geq 3$, including a constant independent of the dimension.
The final comparison \eqref{eq:comparison} uses \cite[Theorem 2.1 and Lemma 3.5]{Bau-Ja2026}. For completeness, we present \cite[Theorem 2.1 and Lemma 3.5]{Bau-Ja2026} in \ref{app-1}
respectively by Theorem \ref{thm:local-global} and Lemma \ref{lem:one-arm}.

The mechanism is to collect all horizontal exits that can be reached
along a vertical fiber from its first entrance. The resulting
direction set can have fewer than two elements, so it is not a
lower-dimensional two-neighbor model. Nevertheless, its probabilities
of hitting specified direction sets admit exact recursive estimates.
After repeated projection, those estimates exceed the corresponding
probabilities in a supercritical planar independent-arrow model.
It is these hitting probabilities, rather than a fixed outgoing
degree, that permit the final connectivity comparison.

\section{\bf Proof of Theorem \ref{thm:main}}
\setcounter{equation}{0}
\noindent

\subsection{\bf Independent local environments}\label{sec:local}
It is useful to consider a larger class of models. In
dimension $n$, let $\mu$ be a probability law on
$$\mathcal{P}(\Dir_n)=\{A:\ A\subseteq \Dir_n\}.$$
Independently at each $x\in\Z^n$, sample $S_x$
with law $\mu$ and put arrows $x\to x+v$ for $v\in S_x$.
Write $\mathbb{P}_{\mu}^{+}$ for the law of this independent-arrow model, and let
$$\theta(\mu,n):=\theta_{\mathrm{arrow}}(\mu,n)=\PP_{\mu}^{+}\left(\left|\Cp_{\mu}(\mathbf{0})\right|=\infty\right).$$
Here $\Cp_{\mu}(\mathbf{0})$ is the outgoing cluster at the origin $\mathbf{0}.$
Call $\mu$ exchangeable if it is invariant under every permutation
of the $2n$ direction labels. For a specified independent local environment, \textit{for conciseness}, write $\PP$ and
$\EE$ respectively for probability and expectation.

Assume $\mu$ is exchangeable and put $q=2n$. For a random set $S$ of law $\mu$, its avoidance probabilities are
\begin{eqnarray}\label{eq:avoidance}
 f_m=\mu(S\cap A=\varnothing),\ A\subseteq\Dir_n,\ |A|=m,\ 0\leq m\leq q.
\end{eqnarray}
Exchangeability makes the probability independent of the particular
set $A$. Thus
$$f_0=1\ \mbox{and}\ 1\geq f_1\geq\cdots\geq f_q\geq 0.$$
For the uniform two-neighbor law in degree $q$, this sequence is
\begin{equation}\label{eq:useq}
 u^{q}_m=\frac{\binom{q-m}{2}}{\binom q2}
       =\frac{(q-m)(q-m-1)}{q(q-1)},\ 0\leq m\leq q.
\end{equation}
The binomial coefficient $\binom{q-m}{2}$ is zero for any $m\in\{q-1,q\}.$ Note here we use superscripts on $u^q=\left(u^q_m\right)_{0\le m\le q}$ to specify the degree, not
to denote powers.

\subsection{\bf Exploring a coordinate fiber}\label{sec:fiber}

Suppose $n\geq3$ and $0<f_1<1$. Write vertices of $\Z^n$ as
$(y,t)$ with $y\in\Z^{n-1}$ and $t\in\Z$, and let
$\pi(y,t)=y$. The fiber over $y$ is
$$F_y=\{(y,t):\ t\in\Z\}.$$
Directions $\pm e_n$ are vertical, and the other $q-2$ directions are horizontal.

For an entrance height $h$, let

\[
 R_y(h)=\{t\in\Z:\ (y,h)\rights (y,t)\text{ using only vertical arrows}\}.
\]
This is almost surely a finite interval. To reach $h+j\ (j\ge 1)$ vertically,
each of the $j$ successive arrows in the positive direction must
be present. These arrows have different tails and hence are
independent with common probability $1-f_1<1$. Consequently
$$\PP\left(h+j\in R_y(h)\right)=(1-f_1)^j\ \mbox{for $j\geq1$},$$
and the analogous
identity holds on the negative ray. Particularly, neither ray
can be reached to infinity; and further these finiteness statements hold
simultaneously for all $(y,h)\in\mathbb{Z}^{n-1}\times \mathbb{Z}$.

Define the set of horizontal exit directions accessible from entrance $(y,h)$ by
\begin{equation}\label{eq:W}
 W_y(h)=\left\{v\in\Dir_{n-1}:\ v\in S_{(y,t)}
                    \text{ for some }t\in R_y(h)\right\}.
\end{equation}
Here $\Dir_{n-1}$ is viewed naturally as a subset of $\Dir_n$ by letting the $n$th coordinate be zero. Let $\widehat\mu$ be the law of $W_0(0)$. The environments of
different fibers are independent and stationary under shifts
of the height coordinate. So $W_y(h)$ has this same law
for every deterministic $y,h$. Permuting the horizontal direction
labels shows that $\widehat\mu$ is exchangeable.

\begin{lemma}[First-entrance coupling]\label{lem:projection}
There is a coupling of $\mathbb{P}_{\mu}^{+}$ on $\Z^n$ and $\mathbb{P}_{\widehat\mu}^{+}$ on $\Z^{n-1}$ such that their
outgoing clusters at the origin satisfy
\begin{equation}\label{eq:clusterprojection}
             \Cp_{\widehat\mu}(\mathbf{0})\subseteq\pi\,\Cp_{\mu}(\mathbf{0}).
\end{equation}
Particularly
\begin{equation}\label{eq:thetaproj}
                    \theta(\mu,n)\geq\theta\left(\widehat\mu,n-1\right).
\end{equation}
\end{lemma}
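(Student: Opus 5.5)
We build the coupling by exploring $\Cp_{\mu}(\mathbf{0})$ fiber by fiber, in the order in which fibers are first entered. At the same time we read off a realisation of the $\widehat\mu$-model on $\Z^{n-1}$. Start with the fiber $F_{\mathbf 0}$, entered at height $h_{\mathbf 0}=0$. From the environment on $F_{\mathbf 0}$ we compute $R_{\mathbf 0}(0)$ and set $\widehat S_{\mathbf 0}:=W_{\mathbf 0}(0)$. Then we run a breadth-first (or any deterministic-order) exploration of the $\widehat\mu$-cluster. Each time a new site $y\in\Z^{n-1}$ is discovered as $y=y'+v$ with $v\in\widehat S_{y'}$, we pick the smallest $t\in R_{y'}(h_{y'})$ with $v\in S_{(y',t)}$ and declare the entrance height $h_y:=t$. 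By construction $(y',h_{y'})\rights(y',t)\to(y,t)$. We then reveal the environment on $F_y$ and set $\widehat S_y:=W_y(h_y)$. Sites $y$ never discovered receive independent fresh samples $\widehat S_y\sim\widehat\mu$.

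Inclusion \eqref{eq:clusterprojection} then follows by induction along the exploration order. Every discovered $y$ satisfies $\mathbf 0\rights (y,h_y)$ in the $\mu$-model. Hence $y\in\pi\,\Cp_\mu(\mathbf 0)$, and every site of $\Cp_{\widehat\mu}(\mathbf 0)$ is discovered. Since $\pi$ maps distinct fibers to distinct points, $|\Cp_{\widehat\mu}(\mathbf 0)|=\infty$ forces $|\Cp_\mu(\mathbf 0)|=\infty$, which gives \eqref{eq:thetaproj}. The almost-sure finiteness of every $R_y(h)$, noted before the lemma, guarantees that $W_y(h)$ is well defined and that each step of the exploration is a finite operation.

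The main obstacle is to check that the family $(\widehat S_y)_{y\in\Z^{n-1}}$ so produced is i.i.d.\ with law $\widehat\mu$. The key point is that $h_y$ is a function of the environments on previously explored fibers only, namely $F_{y'}$ for sites $y'$ discovered earlier. We never look at $F_y$ before entering it, because $W_{y'}(h_{y'})$ depends only on the arrows on $F_{y'}$, which include their horizontal arrows. The environment on $F_y$ is independent of all previously revealed fibers and is stationary in the height coordinate. So conditionally on the past, $W_y(h_y)$ has law $\widehat\mu$, exactly as for deterministic $(y,h)$. Formally, we condition on the $\sigma$-field generated by the fibers revealed so far and apply the independence of distinct fibers together with shift invariance. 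A standard induction over the exploration steps, with the fresh samples filling in the unexplored sites, then shows that the joint law is the product $\widehat\mu^{\otimes\Z^{n-1}}$, that is, $\mathbb{P}_{\widehat\mu}^{+}$.

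One subtlety is that a fiber is revealed only once, at its first entrance, even if $\Cp_\mu(\mathbf 0)$ later reaches it at other heights. This only makes the projected cluster smaller, so it is harmless for the inclusion.
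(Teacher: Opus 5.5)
Your proposal is correct and follows essentially the same route as the paper: a first-in, first-out exploration of fibers in order of first entrance with smallest eligible entrance heights, auxiliary independent $\widehat\mu$-samples at unexplored sites, and independence of distinct fibers plus height stationarity for the product law. The paper makes your ``standard induction'' rigorous with fields $X^j$ in which each coordinate changes at most once, followed by a bounded-convergence passage to the limit field, a step you leave implicit.

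The one point to tighten is the parenthetical ``or any deterministic order''. The inclusion $\Cp_{\widehat\mu}(\mathbf 0)\subseteq\pi\,\Cp_\mu(\mathbf 0)$ needs every discovered site to be processed eventually, so that the discovered set is closed under $\widehat S$-arrows. Breadth-first search guarantees this because the queue stays finite, whereas an arbitrary order such as depth-first need not.
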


\begin{proof}
Write $\mathcal E_y=\left(S_{(y,t)}\right)_{t\in\Z}$ for the entire environment
of the fiber over $y\in\Z^{n-1}$. These fiber environments are independent,
identically distributed, and invariant in law under integer shifts
of the height coordinate.

Run a first-in, first-out exploration on the projected lattice $\mathbb{Z}^{n-1}.$
Initially $Y_0=\mathbf{0}\in\Z^{n-1}$ is discovered,
and its entrance
height is $h_{Y_0}=0$, expose $\mathcal E_{Y_0}$, compute
$R_{Y_0}(h_{Y_0})$ and $W_{Y_0}(h_{Y_0})$, and use $W_{Y_0}(h_{Y_0})$ as its outgoing
direction set in the projected exploration. Let
$$\mathcal{P}_0=\{Y_0\},\ \mathcal{Q}_0=\left\{Y_0+v:\ v\in W_{Y_0}(h_{Y_0})\right\}\ (\mbox{the newly reached neighbor set of $Y_0$}).$$
For any $Y_0+v\in\mathcal{Q}_0$ (if exists), choose the smallest eligible height
$t\in R_{Y_0}(h_{Y_0})$ with $v\in S_{(Y_0,t)}$, and set
$h_{Y_0+v}=t$.  Let $\mathcal H_0$ be
$$\text{the $\sigma$-algebra generated by $\mathcal{P}_0$, $\{\mathcal E_y\}_{y\in\mathcal{P}_0},$ $\mathcal{Q}_0$ and $\{h_y\}_{y\in\mathcal{Q}_0}$.}$$

When $\mathcal{Q}_0$ is empty, then leave the exploration state unchanged at any subsequent update, and let
$$Y_1=\mathbf{0},\ h_{Y_1}=0,\ \mathcal{P}_r=\mathcal{P}_0,\, r\ge 1.$$
Otherwise, choose the first $Y_1\in\mathcal{Q}_0$ according to the lexicographic order, and let
$$\mathcal{P}_1=\{Y_0,Y_1\},\ \mathcal{Q}_1=(\mathcal{Q}_0\setminus\{Y_1\})\cup \left(\left(Y_1+W_{Y_1}(h_{Y_1})\right)\setminus\mathcal{Q}_0\right); $$
and for any $Y_1+v\in Y_1+W_{Y_1}(h_{Y_1})$ (if exists), choose the smallest eligible height
$t\in R_{Y_1}(h_{Y_1})$ with $v\in S_{(Y_1,t)}$, and set
$h_{Y_1+v}=t$. Note for any $y\in\mathcal{Q}_1$ (if exists), $h_y$ is defined. And $\left(Y_1+W_{Y_1}(h_{Y_1})\right)\setminus\mathcal{Q}_0$ is
the newly reached neighbor set of $Y_1$.

\vskip 2mm

Note if we just have $j\ge 1$ updates and $\mathcal{Q}_{j-1}\not=\varnothing,$ then $Y_j\in\mathcal{Q}_{j-1}$ is the first element of $\mathcal{Q}_{j-1}$ in the lexicographic order,
$$\mathcal{P}_j=\{Y_0,Y_1,\cdots,Y_j\},\ \mathcal{Q}_j=(\mathcal{Q}_{j-1}\setminus\{Y_j\})\cup \left(\left(Y_j+W_{Y_j}(h_{Y_j})\right)\setminus\mathcal{Q}_{j-1}\right); $$
and for any $Y_j+v\in Y_j+W_{Y_j}(h_{Y_j})$ (if exists), choose the smallest eligible height
$t\in R_{Y_j}(h_{Y_j})$ with $v\in S_{(Y_j,t)}$, and set
$h_{Y_j+v}=t$. Note for any $y\in\mathcal{Q}_j$ (if exists), $h_y$ is defined. Let $\mathcal H_j$ be
$$\text{the $\sigma$-algebra generated by $\mathcal{P}_j$, $\{\mathcal E_y\}_{y\in\mathcal{P}_j},$ $\left\{\mathcal{Q}_r\right\}_{r\leq j}$ and $\{h_y\}_{y\in\cup_{r\le j}\mathcal{Q}_r}$.}$$

When $\mathcal{Q}_j$ is empty, then leave the exploration state unchanged at
any subsequent update, and let
$$ Y_{j+1}=\mathbf{0},\ h_{Y_{j+1}}=0,\ \mathcal{P}_r=\mathcal{P}_j,\, r\ge j+1.$$
Otherwise, choose the first $Y_{j+1}\in\mathcal{Q}_j$ according to the lexicographic order, and let
$$\mathcal{P}_{j+1}=\{Y_0,Y_1,\cdots,Y_{j+1}\},\ \mathcal{Q}_{j+1}=(\mathcal{Q}_j\setminus\{Y_j\})\cup \left(\left(Y_{j+1}+W_{Y_{j+1}}(h_{Y_{j+1}})\right)\setminus\mathcal{Q}_j\right); $$
and for any $Y_{j+1}+v\in Y_{j+1}+W_{Y_{j+1}}(h_{Y_{j+1}})$ (if exists), choose the smallest eligible height
$t\in R_{Y_{j+1}}(h_{Y_{j+1}})$ with $v\in S_{(Y_{j+1},t)}$, and set
$h_{Y_{j+1}+v}=t$. Note for any $y\in\mathcal{Q}_{j+1}$ (if exists), $h_y$ is defined; and $\left(Y_{j+1}+W_{Y_{j+1}}(h_{Y_{j+1}})\right)\setminus\mathcal{Q}_j$ is
the newly reached neighbor set of $Y_{j+1};$ and
$$\text{both $Y_{j+1}$ and $h_{Y_{j+1}}$ are
$\mathcal H_j$-measurable, and $Y_{j+1}\notin\mathcal P_j$ when $\mathcal{Q}_j\not=\varnothing.$}
$$

Conditionally on $\mathcal H_j$, the environments $\mathcal{E}_y$ at the unprocessed
indices $y$ still have their original independent product law. 
The next index $Y_{j+1}$ is chosen using only the recorded history $\mathcal{H}_j;$ exposing its fresh
environment and then updating $\mathcal{Q}_{j+1}$ reveals no information about
the remaining environments. In particular, independence and height
stationarity imply, for any bounded measurable function $\varphi$
on horizontal direction sets,
\[
 \EE\!\left[
 \varphi\bigl(W_{Y_{j+1}}(h_{Y_{j+1}})\bigr)
 \,\middle|\,\mathcal H_j\right]
             =\int\varphi\,\mathrm d\widehat\mu\ \mbox{on}\ \mathcal{Q}_j\not=\varnothing.
\]

To complete an independent field on the entire projected lattice,
take auxiliary independent sets $V_y$ with law
$\widehat\mu$, independent of the original environments $\mathcal{E}_y,$ and never
consult them during the exploration. For any $j\ge 0,$ define the following random field $X^j=\left(X^j_y\right)_{y\in\Z^{n-1}}:$
\[
 X^j_y=\begin{cases}
          W_y(h_y),&y\in\mathcal P_j,\\
          V_y,&y\notin\mathcal P_j.
        \end{cases}
\]
Given $\mathcal H_j$, all auxiliary sets are still
independent with their original law. Conditionally on $\mathcal H_j,$
on $\mathcal{Q}_j\not=\varnothing$, the new
set $W_{Y_{j+1}}(h_{Y_{j+1}})$ has law $\widehat\mu$ and is
independent of those auxiliary sets.
The processed coordinates are fixed by this history $\mathcal H_j$. Consequently,
for any bounded function $\Phi$ depending on finitely many field
coordinates,
\[
       \EE\left[\left.\Phi\left(X^{j+1}\right)\,\right\vert\,\mathcal H_j\right]
                 =\EE\left[\left.\Phi(X^j)\,\right\vert\,\mathcal H_j\right]\ \mbox{on}\ \mathcal{Q}_j\not=\varnothing.
\]
The equality also holds on $\mathcal{Q}_j=\varnothing$ because the field is then
unchanged. Taking expectations and inducting on $j$ proves that
each $X^j$ has product law with one-site law $\widehat\mu$.
\vskip 2mm

Every coordinate changes at most once. Thus $X^j$ converges
coordinatewise almost surely to a random field $X^\infty$ that uses
$W_{Y_j}(h_{Y_j})$ at every eventually processed vertex $Y_j$ and $V_y$ at all
others. A function depending on finitely many coordinates is
eventually unchanged along this sequence. Bounded convergence theorem
therefore shows that $X^\infty$ also has the required product law.

Every discovered projected vertex $Y_j$ has an accessible representative $(Y_j,h_{Y_j})$
in the original graph. This holds initially at $(\mathbf{0},0)$. If
$(y,h_y)$ is accessible and $y+v$ is discovered from $y$, a
vertical path reaches $(y,t)$ and the selected horizontal arrow
reaches $(y+v,t)=(y+v,h_{y+v})$. Induction proves accessibility
of all assigned representatives. Each update adds at most $q-2$
vertices, so each $\mathcal{Q}_j$ is finite. The first-in,
first-out rule therefore ensures that every discovered vertex is
eventually processed.

Let $D=\cup_{j\geq 0}\mathcal{P}_j.$ The first-discovery tree uses arrows of $X^\infty$, so
$D$ is contained in $\Cp_{X^\infty}(\mathbf{0}),$ the outgoing cluster at $\mathbf{0}\in\Z^{n-1}$ for $X^\infty.$
Conversely, each vertex in $D$ is
eventually processed, and every neighbor specified by its final
direction set is then discovered. Thus $D$ contains the origin
and is closed under outgoing arrows of $X^\infty$, giving
$\Cp_{X^\infty}(\mathbf{0})\subseteq D$. Notice the first-discovery tree supplies a genuine
path to every discovered representative $(Y_j,h_{Y_j}),$ and distinct projected vertices $Y_j$ have
distinct representatives $(Y_j,h_{Y_j}).$ Then \eqref{eq:clusterprojection} and \eqref{eq:thetaproj} hold.
\end{proof}

\begin{lemma}[Exact avoidance transformation]\label{lem:transform}
The avoidance sequence of $\widehat\mu$ is $\T f$, where
for $0\leq m\leq q-2$,
\begin{equation}\label{eq:transform}
 (\T f)_m=G(f_m,f_{m+1},f_{m+2}),\
 G(a,b,c)=\frac{c(1-a)^2+(2-a)b^2}{(1-a+b)^2}.
\end{equation}
\end{lemma}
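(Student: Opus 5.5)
The plan is to compute $\PP\left(W_0(0)\cap A=\varnothing\right)$ directly for a fixed horizontal set $A\subseteq\Dir_{n-1}$ with $|A|=m$, $0\le m\le q-2$. The tool is a decomposition of the vertical interval $R_0(0)$ into the origin site, the upward ray and the downward ray. Throughout I write $a=f_m$, $b=f_{m+1}$, $c=f_{m+2}$.

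By exchangeability of $\mu$, the sets $A\cup\{e_n\}$ and $A\cup\{-e_n\}$ have size $m+1$, and $A\cup\{\pm e_n\}$ has size $m+2$. Hence at any single site:
\begin{itemize}
\item $\PP(S\cap A=\varnothing)=a$;
\item $\PP(S\cap A=\varnothing,\ e_n\notin S)=b$;
\item $\PP(S\cap A=\varnothing,\ \pm e_n\notin S)=c$.
\end{itemize}

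\textbf{Step 1 (origin site).} Split according to which vertical arrows the origin carries. Among configurations avoiding $A$:
\begin{itemize}
\item neither vertical arrow has probability $c$;
\item exactly $+e_n$, or exactly $-e_n$, each has probability $b-c$;
\item both vertical arrows has probability $a-2b+c$.
\end{itemize}

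\textbf{Step 2 (one ray).} Suppose the origin carries $+e_n$. The upward part of $R_0(0)$ is $\{1,\dots,K\}$, where $K$ is the first $t\ge1$ with $e_n\notin S_{(0,t)}$. The arrow $-e_n$ at these sites only points back into $R_0(0)$, so it is irrelevant. The event that the upward sites avoid $A$ therefore requires two things. First, the sites $t<K$ must avoid $A$ and contain $e_n$, each with probability $a-b$. Second, site $K$ must avoid $A$ and lack $e_n$, with probability $b$.

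By independence of the distinct sites, summing over $K$ gives
\[
r:=\sum_{k\ge0}(a-b)^k\,b=\frac{b}{1-a+b}.
\]
The denominator is positive: if $a=1$ then $m=0$ and $b=f_1>0$. By the reflection symmetry $e_n\leftrightarrow -e_n$ of the exchangeable law, the downward ray gives the same factor $r$. The upward sites, the downward sites and the origin are disjoint, so the corresponding events are independent. The a.s.\ finiteness of $R_0(0)$ established before \cref{lem:projection} guarantees the geometric decomposition is exhaustive.

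\textbf{Step 3 (assembly).} Combining Steps 1 and 2,
\[
(\T f)_m=c+2(b-c)\,r+(a-2b+c)\,r^2=c(1-r)^2+r\bigl(2b+(a-2b)r\bigr).
\]
Using $1-r=(1-a)/(1-a+b)$, the first term becomes $c(1-a)^2/(1-a+b)^2$. The second term becomes $b^2\bigl(2(1-a+b)+a-2b\bigr)/(1-a+b)^2=(2-a)b^2/(1-a+b)^2$. Together these give exactly $G(a,b,c)$.

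The main point of care is Step 2. One must justify that conditioning on the vertical arrows at reached sites does not interfere with the avoidance events. This works because each site's avoidance-and-continuation event is a single event about that site's own $S$, and sites are independent. The rest is the algebra above.
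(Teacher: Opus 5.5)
Your proposal is correct and follows the paper's proof. You use the same three-way split at the entrance site by its vertical arrows, the same per-ray avoidance factor $b/(1-a+b)$, and the same assembly into $G$; the only cosmetic difference is that you get the ray factor by summing the geometric series over the exit height $K$, where the paper solves the first-step renewal equation $t=f_{m+1}+(f_m-f_{m+1})t$.
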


\begin{proof}
Fix a set $A$ of $m$ horizontal directions. On one vertical ray,
let $t$ be the probability that all horizontal exits from the
vertically reachable part of that ray avoid $A$, conditional on
having just entered its first vertex. A backward vertical arrow
only returns to the preceding reachable vertex, and cannot create
a new vertex on this ray.

At the first vertex, either $A$ is avoided and the forward
vertical direction is absent, an event of probability $f_{m+1}$,
or $A$ is avoided and that direction is present, an event of
probability $f_m-f_{m+1}$. In the second case the remaining
ray has a fresh independent environment and contributes another
factor $t$. Hence
\begin{equation}\label{eq:t}
             t=f_{m+1}+(f_m-f_{m+1})t,
             \ t=\frac{f_{m+1}}{1-f_m+f_{m+1}}.
\end{equation}
The denominator is positive. For $m=0$, it equals $f_1>0$;
for $m\geq1$, $f_m\leq f_1<1$.
In both cases $f_m-f_{m+1}<1$, so this renewal equation has
a unique solution. The almost sure finiteness of the reachable
ray justifies the decomposition without an additional boundary
condition at infinity.

At the entrance vertex, avoidance of $A$ together with absence
of both vertical directions has probability $f_{m+2}$.
Avoidance together with precisely one specified vertical direction
has probability $f_{m+1}-f_{m+2}$. Avoidance together with
presence of both has probability
$$f_m-2(f_{m+1}-f_{m+2})-f_{m+2}=f_m-2f_{m+1}+f_{m+2}.$$
The two rays beyond the entrance
are independent. Summing the three possibilities gives
\begin{align}
 (\T f)_m
   &=f_{m+2}+2(f_{m+1}-f_{m+2})t
          +(f_m-2f_{m+1}+f_{m+2})t^2\notag\\
   &=(1-t)^2f_{m+2}+2t(1-t)f_{m+1}+t^2f_m.\label{eq:convex}
\end{align}
Substitute \eqref{eq:t} to obtain \eqref{eq:transform}.
\end{proof}


\subsection{\bf Bounds for iterated projection}\label{sec:algebra}

\begin{lemma}[Monotonicity and locality of the transformation]\label{lem:isotone}
The function $G(a,b,c)$ is nondecreasing in each variable on
\[
                0\leq c\leq b\leq a<1;
\]
and satisfies $0\leq G(a,b,c)\leq a$.

Let $q\geq2$ and
$$\mathscr{F}=\left\{f=(f_0,\cdots,f_q):\ f_0=1,\ 0<f_1<1,\ f_1\geq\cdots\geq f_q\geq 0\right\}.$$
Given any $f\in\mathscr{F}.$ Define $\T$ algebraically by $(\T f)_m=G(f_m,f_{m+1},f_{m+2})$ for $0\leq m\leq q-2.$
Then $\T f$ is well defined, and
$$(\T f)_0=1,\  1>(\T f)_1\ge \cdots\ge (\T f)_{q-2}\ge 0\ \mbox{when}\ q>2.$$

Let $g=(g_0,\cdots,g_q)\in\mathscr{F}.$ Suppose $f$ and $g$ admit $j\geq 0$ successive transformations, with
every intermediate input sequence in $\mathscr{F}.$ Then $(\T^r f)_0=1,\ 0\le r\le j;$ and
for any $m\geq 1$ and $m+2j\leq q$, $(\T^j f)_m$ depends only on
$f_m,\cdots,f_{m+2j}$. Moreover,
\[
 \bigl(f_\ell\leq g_\ell\text{ for }m\leq\ell\leq m+2j\bigr)
       \ \Longrightarrow\ (\T^j f)_m\leq(\T^j g)_m.
\]
\end{lemma}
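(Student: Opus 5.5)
The plan is to work with the convex-combination representation \eqref{eq:convex} rather than the closed form of $G$. For $0\le c\le b\le a<1$ put $t=t(a,b)=b/(1-a+b)$; the denominator is at least $1-a>0$, so $t\in[0,1)$. Then
\[
 G(a,b,c)=(1-t)^2c+2t(1-t)b+t^2a .
\]
This is a convex combination of $c\le b\le a$, which gives $0\le G\le a$ at once.

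For monotonicity I will differentiate the closed form in \eqref{eq:transform}. In $c$, $\partial_c G=(1-a)^2/(1-a+b)^2\ge0$. For $a$ and $b$ I will use the representation above. Holding $t$ fixed, $G$ is nondecreasing in $a,b,c$ since the weights are nonnegative. Moreover $\partial_t G=2(1-t)(b-c)+2t(a-b)\ge0$, so $G$ is nondecreasing in $t$. Finally $t=b/(1-a+b)$ is increasing in $a$, and also increasing in $b$, because $\partial_b t=(1-a)/(1-a+b)^2\ge0$. The chain rule then gives monotonicity in each variable on the stated region. This step is the only genuinely computational point, and the representation avoids messy algebra.

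Next come the structural claims for $f\in\mathscr F$. Each triple $(f_m,f_{m+1},f_{m+2})$ satisfies $c\le b\le a$, and $a<1$ except when $m=0$, where $a=f_0=1$. In that case $t=f_1/f_1=1$, which is well defined since $f_1>0$, and $G=a=1$. So $(\T f)_0=1$.

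For $m\ge1$ we have $a\le f_1<1$, so $G$ is well defined. Monotone decrease $(\T f)_m\ge(\T f)_{m+1}$ follows from monotonicity of $G$ applied to the coordinatewise inequality $(f_m,f_{m+1},f_{m+2})\ge(f_{m+1},f_{m+2},f_{m+3})$. Both triples lie in the region, so this is legitimate for $1\le m$; the case $m=0$ is trivial since $(\T f)_0=1$. We also get $(\T f)_1\le f_1<1$ and $(\T f)_{q-2}\ge0$.

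The last part is by induction on $j$. Since every intermediate sequence lies in $\mathscr F$, the previous paragraph applies at each stage and gives $(\T^r f)_0=1$. Locality: $(\T^{j}f)_m=G\bigl((\T^{j-1}f)_m,(\T^{j-1}f)_{m+1},(\T^{j-1}f)_{m+2}\bigr)$, and by induction these three entries depend only on $f_m,\dots,f_{m+2j}$, using indices $m\ge1$ so that no entry with index $0$ enters.

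Comparison: if $f_\ell\le g_\ell$ on $[m,m+2j]$, induction gives $(\T^{j-1}f)_\ell\le(\T^{j-1}g)_\ell$ for $\ell=m,m+1,m+2$. Since $m\ge1$, the three $g$-entries and $f$-entries each lie in the region $c\le b\le a<1$ (they are entries of sequences in $\mathscr F$ with index $\ge1$). Monotonicity of $G$ in each variable, applied one coordinate at a time, then yields $(\T^jf)_m\le(\T^jg)_m$. The mixed triples in between still satisfy $c\le b\le a<1$ when the coordinates are changed in the order $c$, then $b$, then $a$, or can simply be handled through the monotone representation. I expect checking that these intermediate mixed triples stay in the domain to be the only subtle point. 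If it fails, I would fall back on the fact that $G$ extends monotonically via the formula in $t$ on the larger region $0\le c,b\le$ etc., or on the fact that the partial-derivative signs hold whenever $a<1$ and $b\ge c$, $a\ge b$.
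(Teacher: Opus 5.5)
Your proof is correct in substance but takes a different route from the paper.

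\textbf{How the two routes differ.} The paper differentiates the closed form of $G$ directly. It gets explicit formulas for $\partial_aG$, $\partial_bG$, $\partial_cG$; the sign of $\partial_aG$ needs a small extra estimate. It then compares componentwise ordered triples by integrating the gradient along the straight segment joining them, which stays in the region because the region is convex. It uses the convex-combination form only for $0\le G\le a$.

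You derive everything from $G=(1-t)^2c+2t(1-t)b+t^2a$ with $t=b/(1-a+b)$:
\begin{itemize}
\item the weights are nonnegative at fixed $t$;
\item the $t$-derivative is nonnegative when $c\le b\le a$;
\item $t$ is nondecreasing in $a$ and $b$.
\end{itemize}
This avoids the messier derivative algebra and makes the role of $c\le b\le a$ transparent.

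\textbf{The step you flagged.} The cost of your route is turning one-variable monotonicity into a comparison of two ordered triples. Your sentence about changing coordinates in the order $c$, $b$, $a$ is right only if you move from $g$'s triple $(a',b',c')$ down to $f$'s triple $(a,b,c)$. The intermediate points are then $(a',b',c)$ and $(a',b,c)$, which lie in the region since $c\le c'\le b'\le a'$ and $c\le b\le a\le a'$.

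If instead you raise $f$'s triple starting with $c$, the point $(a,b,c')$ can have $c'>b$. There $\partial_bG$ can be negative: at $a=1/2$, $b=1/10$, $c'=2/5$, the numerator factor $(2-a)b-c'(1-a)$ equals $-1/20$. So that version breaks; when raising, you must change $a$ first, then $b$, then $c$.

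\textbf{A cleaner fix from your own representation.} The representation gives the comparison in one line, with no intermediate triples. Put $t=t(a,b)$ and $t'=t(a',b')$; then $t\le t'$ because $b/(1-a+b)$ is nondecreasing in both arguments on $a<1$, $b\ge0$. Hence
\[
G(a,b,c)=(1-t)^2c+2t(1-t)b+t^2a\le(1-t)^2c'+2t(1-t)b'+t^2a'\le G(a',b',c').
\]
The last inequality holds because $s\mapsto(1-s)^2c'+2s(1-s)b'+s^2a'$ is nondecreasing on $[0,1]$ when $c'\le b'\le a'$. The same one-line argument also justifies the shift comparison used for the ordering of $\mathcal T f$.

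Everything else matches the paper: the value $1$ at index $0$, well-definedness, and the induction for locality and comparison.
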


\begin{proof}
Put $w=1-a>0$. Direct differentiation of
$G(a,b,c)=\frac{cw^2+(1+w)b^2}{(w+b)^2}$ gives
\begin{align*}
 \partial_cG&=\frac{w^2}{(w+b)^2},\\
 \partial_bG&=\frac{2w((1+w)b-cw)}{(w+b)^3},\\
 \partial_aG&=\frac{b\{b(2-b)+w(b-2c)\}}{(w+b)^3}.
\end{align*}
All are nonnegative: for the last derivative its brace is at least
$b(2-b-w)=b(1+a-b)\geq0$. The displayed domain is convex.
Integrating the gradient along the line segment between two
componentwise comparable triples in this domain proves the stated
nondecreasing property. Put $t=b/(1-a+b)\in[0,1]$ in
\eqref{eq:convex}; its algebraic identity expresses $G(a,b,c)$ as a
convex combination of $a,b,c$, proving $0\leq G(a,b,c)\leq a$.

For $m\geq1$, each triple defining $(\T f)_m$ belongs to this
domain, so its denominator is positive and its value is below one.
For $m=0$, direct substitution gives
$G(1,f_1,f_2)=1$, since $f_1>0$. For successive positive indices,
\[
 (f_m,f_{m+1},f_{m+2})
          \geq(f_{m+1},f_{m+2},f_{m+3})
\]
componentwise, wherever both triples exist. Monotonicity proves
that the transformed positive-index sequence is nonincreasing;
the zeroth component is larger than all of them.

Finally, a single update of component $m$ uses precisely the
index window $m,m+1,m+2$. For $j$ updates, its three arguments
use windows whose union is $m,\cdots,m+2j$. Induction on $j$
proves locality and the comparison implication: the assumed
initial inequalities give inequalities for all three arguments
at the last update, and monotonicity of $G$ completes the step.
This argument uses only positive indices. Clearly the zeroth component is always one during iterations.
\end{proof}

All iterated projections of the uniform two-neighbor law are
legitimate for the degrees considered below. Initially
$$u_0^q=1>u_1^q>u_2^q>\cdots>u^q_{q-2}>0.$$
If this property holds in degree $q\geq 6$ for $(g_0,g_1,\cdots,g_q),$ then
$$G(g_m,g_{m+1},g_{m+2})>0\ \mbox{for}\ 1\leq m\leq q-4,$$
because
$g_{m+2}>0$ and $1-g_m>0$. Also $G(g_m,g_{m+1},g_{m+2})\leq g_m<1$.
Thus, after iterations, in every resulting degree $r$ at least four, the obtained sequence $(f_0,f_1,\cdots,f_{r})$
satisfies
$$0<f_1<1=f_0,$$
as required by Lemma \ref{lem:projection}.

\begin{lemma}[One-step comparison away from the last two indices]\label{lem:onestep}
For each even $q\geq4$ and $0\leq m\leq q-2$,
\begin{equation}\label{eq:onestep}
                         (\T u^{q+2})_m\leq u^q_m.
\end{equation}
\end{lemma}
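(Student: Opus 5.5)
We prove \eqref{eq:onestep} by direct algebra: we substitute the closed form \eqref{eq:useq} into $G$ and reduce the claim to a polynomial inequality in two integer variables. Write $Q=q+2$, $N=Q(Q-1)$ and $k=Q-m$, so that $4\le k\le Q$. Then
$$a=u^{Q}_m=\frac{k(k-1)}{N},\quad b=u^{Q}_{m+1}=\frac{(k-1)(k-2)}{N},\quad c=u^{Q}_{m+2}=\frac{(k-2)(k-3)}{N},$$
and the target is $u^q_m=(k-2)(k-3)/\bigl((Q-2)(Q-3)\bigr)$.

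The case $m=0$ ($k=Q$) is immediate, since $G(1,b,c)=1=u^q_0$ by Lemma \ref{lem:isotone}. For $m\ge1$ we use the convex-combination form \eqref{eq:convex} with $t=b/(1-a+b)$. Put $D=N-2(k-1)$. Since $a-b=2(k-1)/N$, we get $1-a+b=D/N$, hence
$$t=\frac{(k-1)(k-2)}{D},\qquad 1-t=\frac{N-k(k-1)}{D},$$
using $(k-1)(k-2)+2(k-1)=k(k-1)$. This gives
$$G=\frac{(N-k(k-1))^2c+2(k-1)(k-2)(N-k(k-1))\,b+(k-1)^2(k-2)^2a}{D^2}.$$
Every term carries the factor $(k-2)/N$, which is positive for $k\ge4$. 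Dividing by it, the claim becomes
\begin{equation*}
\Bigl[(k-3)(N-k(k-1))^2+2(k-1)^2(k-2)(N-k(k-1))+k(k-1)^3(k-2)\Bigr](Q-2)(Q-3)\le (k-3)\,N\,D^2 .
\end{equation*}
Here $D>0$ and $N-k(k-1)\ge0$ because $k\le Q$.

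The remaining step is to verify this polynomial inequality for $4\le k\le Q-1$ and even $Q\ge6$. The plan is to substitute $Q=k+1+s$ with integer $s\ge0$, which encodes $m\ge1$. Then $N-k(k-1)=(k+1+s)(k+s)-k(k-1)$ is a polynomial in $k,s$ with nonnegative coefficients. We expand the difference $\mathrm{RHS}-\mathrm{LHS}$ as a polynomial in $s$ whose coefficients are polynomials in $k$. We then show each coefficient is nonnegative for $k\ge4$, or, failing that, group terms so that any negative lower-order coefficients are absorbed by higher powers of $s$ or by factors $(k-3)\ge1$. The leading behaviour is favourable: for large $Q$ both sides are of order $k^3Q^6$, and the right side wins at the next order. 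This matches the heuristic that projection removes two directions but the vertical fiber adds extra exits.

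The main obstacle is this final expansion. The polynomial has degree about $9$, and the cancellations near the boundary cases ($s=0$, i.e.\ $m=1$, and $k=4$, i.e.\ $m=q-2$) may be tight. If the coefficients in $s$ turn out not to be uniformly nonnegative, we would first check the boundary lines $s=0$ and $k=4$ by hand. We would then prove that the difference is nondecreasing in $s$ by differentiating the expanded form, and finally check the finitely many small cases $Q=6,8$ numerically.
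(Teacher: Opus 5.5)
Your reduction is correct and follows the paper's route. With $k=r+2$, your $D$ is the paper's $L=q(q+3)-2r$ and your $t$ is its $h=r(r+1)/L$. Your reduced inequality is equivalent to nonnegativity of the right side of \eqref{eq:polynomialidentity}.

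The gap is that the proposal stops exactly where the lemma's content begins. You never prove the polynomial inequality. You describe an expansion whose outcome you do not know, with fallbacks (monotonicity in $s$, numerics for $Q=6,8$) that are not carried out. Checking finitely many $Q$ numerically could not settle an infinite family in any case. Your size count is also off: both sides have total degree $7$, with leading part of order $kQ^6$ rather than $k^3Q^6$, and their degree-$7$ parts cancel identically.

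The missing idea is an exact factorization of the difference. Let $\Delta$ denote your right side minus your left side. Then
\[
\Delta=2(Q-k)\,P_{Q-2}(k-2),
\]
where $P_q$ is the quadratic in \eqref{eq:P}. The factor $Q-k=m$ is why $m=0$ gives equality. The paper finishes by noting that $P_q(r)$ is concave in $r$ and positive at $r=2$ and at $r=q$.

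Your own coefficient-sign plan would also succeed once this factor is exposed. Set $r=k-2$, $m=Q-k=s+1$ and $q=r+m$; then
\begin{align*}
P_{r+m}(r)={}&2(r-1)m^4+(8r^2+r-13)m^3+12(r+1)(r^2-2)m^2\\
&+(8r^4+13r^3-14r^2-30r-9)m+r(2r^4+4r^3-3r^2-10r-5).
\end{align*}
Every coefficient is positive for $r\ge 2$. Hence every coefficient in $s$ of $\Delta=2(s+1)P_{r+s+1}(r)$ is positive for $k\ge 4$, which gives your inequality for $4\le k\le Q-1$. Until you derive and check this identity, or an equivalent one, the proposal is a correct setup rather than a proof.
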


\begin{proof}
Put $r=q-m$, so $2\leq r\leq q$, and set
$D=(q+2)(q+1)$ and $L=q(q+3)-2r>0$.
For the triple from $u^{q+2}$,
\[
 a=\frac{(r+2)(r+1)}D,\ b=\frac{r(r+1)}D,\ c=\frac{r(r-1)}D.
\]
Using \eqref{eq:transform}, simplification gives
\begin{equation}\label{eq:polynomialidentity}
 \frac{r(r-1)}{q(q-1)}-G(a,b,c)
 =\frac{2r(q-r)P_q(r)}{Dq(q-1)L^2},
\end{equation}
where
\begin{align}
 P_q(r)={}&-(3q^2+5q+4)r^2
 +(2q^4+9q^3+15q^2+18q+4)r\notag\\
 &-(2q^4+13q^3+24q^2+9q).\label{eq:P}
\end{align}
Here is an intermediate expression making the identity checkable:
if $h=r(r+1)/L$, then
$G(a,b,c)=[r(r-1)+4rh+2h^2]/D$.
Clearing the denominators gives \eqref{eq:polynomialidentity}.

The polynomial $P_q$ is concave as a function of $r$, and
\begin{align*}
 P_q(2)&=2q^4+5q^3-6q^2+7q-8>0,\\
 P_q(q)&=q(2q^4+4q^3-3q^2-10q-5)>0.
\end{align*}
For $q\geq4$ these signs follow by grouping
$2q^4-6q^2>0$ in the first expression, and
$2q^4-3q^2>0$ and $4q^3-10q-5>0$ in the second.
Concavity yields $P_q(r)\geq\min\{P_q(2),P_q(q)\}>0$
on $[2,q]$. All factors on the right side of
\eqref{eq:polynomialidentity} are nonnegative. When $m=0$,
$q-r=0$ and both sides of \eqref{eq:onestep} equal one.
\end{proof}

The comparison cannot be extended to all indices. It fails at index
$q-1$, already when $q=4$: one projection from degree six to four gives
\[
                  (\T u^6)_3=\frac9{845}>0=u^4_3.
\]
By contrast, the single-step full-set avoidance probability satisfies
\[
 (\T u^{q+2})_q
   =G\!\left(\frac{2}{(q+2)(q+1)},0,0\right)=0=u^q_q.
\]
After further projections, however, the full-set avoidance probability
can be positive. Thus the last indices still require separate bounds
in an iterated comparison.

Let $Q\geq4$ be an even initial degree. For even $q$ with
$4\leq q\leq Q$, write
\begin{equation}\label{eq:iteratednotation}
                f^{Q\downarrow q}=\T^{(Q-q)/2}u^Q.
\end{equation}
This is the avoidance sequence obtained by repeated coordinate
fiber exploration, until the remaining lattice has degree $q$.

\begin{lemma}[Uniform bounds for the lower indices]\label{lem:iterated}
For every pair of such degrees,
\begin{equation}\label{eq:iteratedbound}
                 f^{Q\downarrow q}_m\leq u^q_m,
                 \ 0\leq m\leq q-2.
\end{equation}
\end{lemma}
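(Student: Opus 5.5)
The plan is to argue by induction on the number of projection steps $k=(Q-q)/2$. Monotonicity and locality of $\T$ (Lemma~\ref{lem:isotone}) carry the inductive hypothesis through one more projection, and the one-step comparison (Lemma~\ref{lem:onestep}) then closes the step.

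For $k=0$ we have $f^{Q\downarrow Q}=u^Q$, so \eqref{eq:iteratedbound} holds with equality. For the inductive step, fix an even $q$ with $4\le q\le Q-2$ and assume
$$f^{Q\downarrow (q+2)}_\ell\le u^{q+2}_\ell,\qquad 0\le \ell\le q,$$
noting that $q=(q+2)-2$ is exactly the range supplied by the hypothesis in degree $q+2$. By \eqref{eq:iteratednotation}, $f^{Q\downarrow q}=\T f^{Q\downarrow(q+2)}$. For $0\le m\le q-2$, the component $(\T f^{Q\downarrow(q+2)})_m$ uses only the window $\ell\in\{m,m+1,m+2\}$. Since $m+2\le q$, this window lies entirely inside the range where the hypothesis is available. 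This is the key point: the last two indices of degree $q+2$, where no comparison with $u^{q+2}$ is known (as the example $(\T u^6)_3>u^4_3$ shows), are never consulted.

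Applying Lemma~\ref{lem:isotone} with $j=1$ then gives
$$(\T f^{Q\downarrow(q+2)})_m\le (\T u^{q+2})_m.$$
To invoke it I must check that both inputs lie in $\mathscr F$ in degree $q+2\ge 6$. For $u^{q+2}$ this is immediate from \eqref{eq:useq}. For $f^{Q\downarrow(q+2)}$ it follows from the legitimacy discussion preceding Lemma~\ref{lem:onestep}: iterates of $u^Q$ in degrees at least four satisfy $f_0=1$, $0<f_1<1$, and are nonincreasing with nonnegative entries by Lemma~\ref{lem:isotone}. With these in hand, the relevant triples lie in the domain $0\le c\le b\le a<1$ for $m\ge1$. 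For $m=0$ both sides equal $1$. Finally, Lemma~\ref{lem:onestep}, applied with the even degree $q\ge4$, gives $(\T u^{q+2})_m\le u^q_m$, and chaining the two inequalities completes the induction.

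The only delicate point is the bookkeeping of index ranges. One must verify that the inductive hypothesis is needed only up to index $q$ in degree $q+2$, and that this is exactly the range the previous step supplies. One must also confirm that the monotonicity of $G$ applies to componentwise comparable triples both lying in the convex domain, which Lemma~\ref{lem:isotone} provides. Once this is in place, no computation beyond Lemma~\ref{lem:onestep} is required.
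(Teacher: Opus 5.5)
Your proof is correct. It uses the same two ingredients as the paper, Lemma~\ref{lem:onestep} and the monotonicity in Lemma~\ref{lem:isotone}, but it splits the induction at the other end.

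The paper peels off the \emph{first} projection:
\begin{itemize}
\item It replaces $\T u^Q$ by $u^{Q-2}$ at indices up to $Q-4$, using Lemma~\ref{lem:onestep}.
\item It pushes this comparison through the remaining $j=(Q-q)/2-1$ transformations, using the $j$-step locality and monotonicity of $\T^j$ and checking that the window $m,\dots,m+2j$ stays at or below $Q-4$. This gives $f^{Q\downarrow q}_m\le f^{(Q-2)\downarrow q}_m$.
\item It then applies the induction hypothesis with initial degree $Q-2$.
\end{itemize}

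You instead peel off the \emph{last} projection and keep $Q$ fixed. You then need only monotonicity of $G$ on a single window $m,m+1,m+2\le q$, followed by the one-step comparison at degree $q+2$.

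Your route is slightly more economical. It avoids the multi-step locality clause of Lemma~\ref{lem:isotone}, and only the two sequences $f^{Q\downarrow(q+2)}$ and $u^{q+2}$ must be checked to lie in $\mathscr F$, rather than all intermediate iterates of two chains.

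The paper's route gives a by-product yours does not. Its argument works for every initial degree larger than $q$, so it yields the monotone chain $f^{Q\downarrow q}_m\le f^{(Q-2)\downarrow q}_m\le\cdots\le f^{q\downarrow q}_m=u^q_m$ for $0\le m\le q-2$. In words, starting the projection from a higher dimension never increases the lower-index avoidance probabilities.
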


\begin{proof}
The zeroth component is one on both sides, so assume $m\geq1$.
Use induction on $(Q-q)/2$. The zero-step case is equality.
For a positive number of steps, first apply Lemma \ref{lem:onestep}:
$\T u^Q\leq u^{Q-2}$ at indices at most $Q-4$.
There are $j=(Q-q)/2-1$ remaining transformations. Their final
component $m\leq q-2$ depends nondecreasingly only on the initial
indices at most
\[
                        m+2j\leq q-2+Q-q-2=Q-4.
\]
Thus Lemma \ref{lem:isotone} gives
$f^{Q\downarrow q}_m\leq f^{(Q-2)\downarrow q}_m$.
Apply the induction hypothesis.
\end{proof}

The restriction on the indices is substantive. Projecting a
two-neighbor model can yield only one horizontal exit direction,
so domination of a uniform two-neighbor law at every avoidance
index would be false. The proof uses only the indicated lower
indices, and bounds the remaining indices separately.

\subsection{\bf A uniform planar comparison and the proof}\label{sec:finish}

Fix $Q=2d\geq6$ and put $a_m=f^{Q\downarrow6}_m$ for
$0\leq m\leq6$. If $Q=6$, this is just $u^6$.
By Lemma \ref{lem:iterated},
\begin{equation}\label{eq:sixlower}
      a_1\leq\frac{2}{3},\ a_2\leq\frac{2}{5},\
      a_3\leq\frac{1}{5},\ a_4\leq\frac{1}{15}.
\end{equation}
Nonincrease gives $a_5\leq a_4\leq 1/15$.
For the last index we also have
\begin{equation}\label{eq:lastindex}
                                  a_6\leq\frac1{28}.
\end{equation}
{\it Indeed}, if $Q=6$ then $a_6=0$. If $Q\geq8$, then the last
projection from degree eight to six and
$$G(a,b,c)\leq a,\ 0\le c\le b\le a<1$$
give
$a_6\leq f^{Q\downarrow8}_6$; now Lemma \ref{lem:iterated} applies at index $6=8-2$, and
$u^8_6=1/28$.

Let $b_m=f^{Q\downarrow4}_m$ be the final planar avoidance
sequence. Apply $G$ to the bounds above, using Lemma \ref{lem:isotone}:
\begin{align}
 b_1&\leq G(2/3,2/5,1/5)=\frac{53}{121},\notag\\
 b_2&\leq G(2/5,1/5,1/15)=\frac{11}{80},\notag\\
 b_3&\leq G(1/5,1/15,1/15)=\frac{57}{845},\notag\\
 b_4&\leq G(1/15,1/15,1/28)=\frac{134}{3375}.\notag \label{eq:fourbounds}
\end{align}
The index $m=4$ is essential: $b_4$ is the probability of an empty
planar outgoing set, which can be positive after repeated projection.
Each $b_m$ $(1\le m\le 4)$ is strictly smaller than $(9/20)^m$. The exact differences are listed to make the
strict comparison transparent:
\begin{equation}\label{eq:gaps}
 \begin{aligned}
 \frac9{20}-\frac{53}{121}&=\frac{29}{2420},&
 \left(\frac9{20}\right)^2-\frac{11}{80}&=\frac{13}{200},\\
 \left(\frac9{20}\right)^3-\frac{57}{845}&=\frac{32001}{1352000},&
 \left(\frac9{20}\right)^4-\frac{134}{3375}&=\frac{5627}{4320000}.
 \end{aligned}
\end{equation}
\vskip 3mm

\begin{proof}[{\bf Proof of Theorem \ref{thm:main}}]
For $d\geq3$, iterate  Lemma \ref{lem:projection} down to a planar
independent environment of law $\nu_d$. Its avoidance sequence
is $b=f^{2d\downarrow4}$. For each nonempty
$A\subseteq\Dir_2$, let $m=|A|$. Then by \eqref{eq:gaps},
\begin{equation}\label{eq:localhit}
 \nu_d(S\cap A\neq\varnothing)
     =1-b_m\geq1-(9/20)^m.
\end{equation}
The right side is the hitting probability when each of the four
outgoing directions is independently included with probability
$11/20$. Note \eqref{eq:out-percolation-probability} for $p\in(0,1)$ and $d=2.$ By Theorem \ref{thm:local-global} and Lemma \ref{lem:one-arm},
\begin{eqnarray}\label{eq:comparison}
 \theta(\nu_d,2)\geq\theta_{\mathrm{arrow}}(11/20,2)
                         =\thetab(11/20,2).
\end{eqnarray}
Iterated application of \eqref{eq:thetaproj} proves
\eqref{eq:uniformbound}. Its right side is positive because
$11/20>1/2$.
\end{proof}

%

\renewcommand\thesection{\appendixname~\Alph{section}}
\setcounter{section}{0}
\section{\bf The local-to-global theorem and the one-arm identity}\label{app-1}
\setcounter{equation}{0}
\renewcommand\thetheorem{A.\arabic{theorem}}
\renewcommand\theequation{A.\arabic{equation}}
\noindent
Given any two probabilities $\mu$ and $\nu$ on $\mathcal{P}(\Dir_d)=\{A:\ A\subseteq \Dir_d\}$ such that for any $A\in \mathcal{P}(\Dir_d)$,
\begin{eqnarray*}
\sum\limits_{B\cap A\not=\varnothing}\mu(B)\leq \sum\limits_{B\cap A\not=\varnothing}\nu(B).
\end{eqnarray*}
For any $n\in\mathbb{Z}_+=\{0, 1, 2,\cdots\},$ let
$$B_n=\left\{x\in\mathbb{Z}^d:\ \Vert x\Vert_1\leq n\right\},\ \partial B_n=B_{n+1}\setminus B_n.$$
Recall Subsection \ref{sec:local}.

\begin{theorem}[Local-to-global \mbox{\cite[Theorem 2.1]{Bau-Ja2026}}]\label{thm:local-global} For any $n\in\mathbb{Z}_{+},$
$$\mathbb{P}_\mu^{+}[\mathbf{0}\rights \partial B_n]\leq \mathbb{P}_{\nu}^{+}[\mathbf{0}\rights \partial B_n].$$
\end{theorem}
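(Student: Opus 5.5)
We prove the local-to-global inequality by an exploration argument with site-by-site replacement. The strategy is to show that $\mathbb{P}^{+}[\mathbf{0}\rights \partial B_n]$ is nondecreasing under changing the law at a single site from $\mu$ to $\nu$, with all other sites fixed. Only the finitely many sites of $B_n$ matter for the event, since arrows out of $\partial B_n$ or beyond are irrelevant once $\partial B_n$ is reached. So we enumerate $B_n=\{x_1,\dots,x_N\}$ and interpolate through the hybrid laws $\mathbb{P}^{(k)}$, in which sites $x_1,\dots,x_k$ use $\nu$ and all other sites use $\mu$. Summing the $N$ one-site comparisons gives the theorem.

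For the one-site step, fix the site $x=x_k$ and condition on the sets $(S_y)_{y\neq x}$. Define
\[
 A=\{v\in\Dir_d:\ x+v\rights \partial B_n \text{ in the graph with all arrows out of } x \text{ deleted}\}.
\]
This set depends only on the other sites. We then split into two cases.
\begin{itemize}
\item If $\mathbf{0}\rights\partial B_n$ without passing through $x$, the event occurs regardless of $S_x$.
\item Otherwise, the event occurs iff $\mathbf{0}\rights x$ avoiding $x$ as an intermediate vertex (this again depends only on other sites) and $S_x\cap A\neq\varnothing$.
\end{itemize}
The second case needs a little care. A path from $x$ onward may be taken self-avoiding, so after leaving $x$ through direction $v$ it never returns to $x$. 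Hence reaching $\partial B_n$ from $x$ is exactly the condition that some $v\in S_x$ has $x+v\rights\partial B_n$ in the graph without $x$'s out-arrows, i.e. $v\in A$.

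Thus, conditionally on everything except $S_x$, the probability of the event is either a constant in $\{0,1\}$ or equal to $\mathbf{1}_{E}\cdot\PP(S_x\cap A\neq\varnothing)$, where $E$ and $A$ are measurable with respect to the other sites. By hypothesis, $\sum_{B\cap A\neq\varnothing}\mu(B)\le\sum_{B\cap A\neq\varnothing}\nu(B)$ for every fixed $A$. Integrating over the other sites, whose law is the same under $\mathbb{P}^{(k-1)}$ and $\mathbb{P}^{(k)}$ by independence, gives $\mathbb{P}^{(k-1)}\le\mathbb{P}^{(k)}$.

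The main point to verify carefully is the decomposition above: that the dependence on $S_x$ is exactly through a single hitting event $\{S_x\cap A\neq\varnothing\}$ with $A$ independent of $S_x$. This rests on the self-avoiding path reduction. The case $x=\mathbf{0}$ is covered automatically, with $E$ always true. The case $x\in\partial B_n$ does not arise, since we only vary sites in $B_n$; sites outside $B_n$ never affect the event, so they may use either law.
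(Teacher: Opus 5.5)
Your proof is correct. The paper gives no argument of its own for this statement: it quotes the result from \cite[Theorem 2.1]{Bau-Ja2026} and uses it only as a black box in \eqref{eq:comparison}. Your proposal therefore supplies a self-contained proof where the paper relies on a citation.

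The two ingredients you identify are the right ones:
\begin{itemize}
\item The event $\{\mathbf{0}\rights\partial B_n\}$ is determined by $(S_y)_{y\in B_n}$. Each step changes $\|\cdot\|_1$ by one, so a path must hit $\partial B_n$ before it can leave $B_{n+1}$. This makes the site-by-site interpolation finite.
\item Conditionally on $(S_y)_{y\neq x}$, the event is either already decided or equals $E\cap\{S_x\cap A\neq\varnothing\}$, with $E$ and $A$ fixed by the conditioning. The hypothesis then applies directly to the fixed set $A$, including the trivial case $A=\varnothing$.
\end{itemize}

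Citing buys the paper brevity. Your argument makes clear why a comparison of hitting probabilities suffices, rather than stochastic domination of $S_x$: a single site influences the connection event only through one hitting event $\{S_x\cap A\neq\varnothing\}$.
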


For any $p\in [0,1],$ let $\mathbb{P}^{+}_p=\mathbb{P}_{\mu_p}^{+}$ with $\mu_p$ given by
$$\mu_p(A)=p^{\vert A\vert}(1-p)^{2d-\vert A\vert},\ A\subseteq\Dir_d;$$
and put
\begin{eqnarray}\label{eq:out-percolation-probability}
\theta_{\mathrm{arrow}}(p,d)=\theta_{\mathrm{arrow}}(\mu_p,d).
\end{eqnarray}
Write $\mathbb{P}_p$ for the undirected Bernoulli bond percolation measure on $\mathbb{Z}^d$ with density $p.$
Let $\mathbf{0}\leftrightsquigarrow\partial B_n$ be the event that $\partial B_n$ can be reached via an undirected open path from $\mathbf{0}$ in the model
$\mathbb{P}_p,$ and recall $\theta_{\rm b}(p,d)=\mathbb{P}_p[\mbox{The open cluster of}\ \mathbf{0}\ \mbox{is infinite}]$.

\begin{lemma}[One-arm identity \text{\cite[Lemma 3.5]{Bau-Ja2026}}]\label{lem:one-arm} For any $p\in (0,1),$
$$\mathbb{P}_p^{+}[\mathbf{0}\rights\partial B_n]=\mathbb{P}_p[\mathbf{0}\leftrightsquigarrow\partial B_n],\ n\in\mathbb{Z}_{+}.$$
Thus $\theta_{\mathrm{arrow}}(p,d)=\theta_{\rm b}(p,d).$
\end{lemma}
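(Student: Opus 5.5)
The plan is to couple the two explorations so that they discover exactly the same set of vertices in the same order, driven by a single i.i.d.\ sequence of Bernoulli$(p)$ variables. Under $\mathbb{P}^{+}_p$ each of the $2d$ arrows $x\to x+v$ is present independently with probability $p$, independently over all ordered pairs $(x,x+v)$. Under $\mathbb{P}_p$ each unordered edge is open independently with probability $p$. In both cases I will explore the cluster of $\mathbf{0}$ by the same first-in, first-out procedure used in the proof of Lemma \ref{lem:projection}.

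\emph{The exploration.} Vertices are either undiscovered, queued, or processed. Initially $\mathbf{0}$ is queued. At each step, take the first queued vertex $x$ in lexicographic-then-FIFO order. For each $v\in\Dir_d$, in a fixed order, such that $y=x+v$ is still undiscovered, examine one variable: the arrow $x\to y$ in the directed model, or the edge $\{x,y\}$ in the undirected model. If it is present or open, queue $y$. Then mark $x$ processed.

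In the directed model, arrows $x\to y$ with $y$ already discovered are never needed, since $y$ lies in $\Cp(\mathbf{0})$ anyway. So the set discovered is exactly $\Cp(\mathbf{0})$. In the undirected model, an edge $\{x,y\}$ with $y$ already discovered is either already examined (if $y$ was processed) or irrelevant (if $y$ is queued, it is in the cluster anyway). Hence the discovered set is exactly the open cluster of $\mathbf{0}$.

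\emph{Freshness.} The key check is that every variable examined is fresh, that is, not previously examined and independent of the history.
\begin{itemize}
\item In the directed model this is clear: the arrows with tail $x$ are examined only while $x$ is being processed, and each vertex is processed once.
\item In the undirected model, an edge $\{x,y\}$ examined while processing $x$ has $y$ undiscovered. Edges are examined only from processed endpoints, and neither $x$ (before now) nor $y$ has been processed. So the edge was never examined before.
\end{itemize}
Therefore, conditionally on the history, each examined variable is Bernoulli$(p)$ independent of the past. Feeding the $k$-th examined variable of both explorations from the same $\xi_k$ gives identical discovery sequences, because the decision of which variable to examine next depends only on the discovered and processed sets.

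\emph{Conclusion.} It follows that $\Cp(\mathbf{0})$ under $\mathbb{P}^{+}_p$ and the open cluster of $\mathbf{0}$ under $\mathbb{P}_p$ have the same law. In particular $\mathbb{P}^{+}_p[\mathbf{0}\rights\partial B_n]=\mathbb{P}_p[\mathbf{0}\leftrightsquigarrow\partial B_n]$ for every $n$. By local finiteness, a cluster is infinite iff it meets every $\partial B_n$. These events decrease in $n$, so letting $n\to\infty$ gives $\theta_{\mathrm{arrow}}(p,d)=\theta_{\mathrm b}(p,d)$.

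The only delicate point is the freshness argument in the undirected case, namely that no edge is examined twice from both of its ends. This is exactly where the rule ``examine only toward undiscovered neighbours'' is used.
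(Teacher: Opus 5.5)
Your proof is correct. The paper does not prove this lemma; it imports it from \cite[Lemma 3.5]{Bau-Ja2026}. Your exploration coupling is therefore a self-contained replacement for that citation. In both models the cluster of $\mathbf{0}$ is the same deterministic function of the i.i.d.\ Bernoulli$(p)$ outcomes of the successively examined variables, and querying only toward undiscovered vertices is exactly what makes every examined edge fresh in the undirected model.

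One thing to tidy is the selection rule. Use genuine FIFO order rather than ``the lexicographically first queued vertex''; the exploration in the proof of Lemma~\ref{lem:projection}, which you borrowed, mixes the two. With a purely lexicographic rule, an infinite exploration can starve a queued vertex forever. For example, if the cluster contains the ray $\{-ke_1:\ k\geq 0\}$, no queued vertex with positive first coordinate is ever processed, and then the discovered set need not be the whole cluster. This would not hurt the identity for fixed $n$: a finite discovered set is the full cluster, and an infinite one, being connected to $\mathbf{0}$, meets every $\partial B_n$. You can also sidestep the issue entirely by not processing vertices of $\partial B_n$, which makes the exploration finite.
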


\bmhead*{Funding}
The project is supported partially by CNNSF (No.\,12671180) and by
Hu Xiang Gao Ceng Ci Ren Cai Ju Jiao Gong Cheng-Chuang Xin Ren Cai (No.\,2019RS1057).


\begin{thebibliography}{99}

\bibitem{Am-He2025}
G. Amir, M. Heydenreich, C. Hirsch. (2025). Planar reinforced $k$-out percolation. {\it Stoc. Process. Appl.} {\bf 189}, paper no. 104706, 11 pp.

\bibitem{Ba-Bo2013}
P. Balister, B. Bollob\'{a}s. (2013). Percolation in the $k$-nearest neighbor graph. In {\it Recent Results in Designs and Graphs: A Tribute to Lucia Gionfriddo, Quaderni di Matematica,} eds. M. Buratti, C. Lindner, F. Mazzocca and N. Melone.
Vol.\,{\bf 28}, pp.\,83-100.

\bibitem{Bau-Ja2026}
J.~B\"aumler, B.~Jahnel, J.~K\"oppl, B.~Lodewijks, L.~Reeves, A.~T\'obi\'as. (2026). Local criteria for global connectivity comparisons: beyond
stochastic domination (\url{https://arxiv.org/abs/2510.03934v2}). arXiv:2510.03934v2.


\bibitem{Co-He2026}
D.~Coupier, B.~Henry, B.~Jahnel, J.~K\"oppl. (2026). The planar lattice two-neighbor graph percolates (\url{https://arxiv.org/abs/2412.20781v1}). {\it Ann. Probab.} (to appear).

\bibitem{Ha-Me1996}
O. H\"{a}ggstr\"{o}m, R. Meester. (1996). Nearest neighbor and hard sphere models in continuum percolation. {\it Random. Structures. Algorithms.} {\bf 9}, 295-315.

\bibitem{Ha1960}
T. E. Harris. (1960). A lower bound for the critical probability in a certain percolation process. {\it Proc. Cambridge Philos. Soc.} {\bf 56}, 13-20.

\bibitem{Ja-Ko2025}
B.~Jahnel, J.~K\"oppl, B.~Lodewijks, A.~T\'obi\'as. (2025). Percolation in lattice $k$-neighbor graphs. {\it J. Appl. Probab.} {\bf 62(4)}, 1475-1492.

\bibitem{Ja-To2022}
B. Jahnel, A. T\'{o}bi\'{a}s. (2022). Absence of percolation in graphs based on stationary point processes with degrees bounded by two. {\it Random. Structures. Algorithms.} {\bf 62}, 240-255.

\bibitem{Ke1980}
H.~Kesten. (1980). The critical probability of bond percolation on the square lattice equals $1/2$. {\it Comm. Math. Phys.} {\bf 74(1)}, 41-59.
\end{thebibliography}
\end{document}